\newcommand{\bbL}{\mathbb{L}}
\newcommand{\bbZ}{\mathbb{Z}}
\newcommand{\rmB}{\mathrm{B}}
\newcommand{\rmD}{\mathrm{D}}
\newcommand{\rmE}{\mathrm{E}}
\newcommand{\rmH}{\mathrm{H}}
\newcommand{\rmI}{\mathrm{I}}
\newcommand{\rmQ}{\mathrm{Q}}
\newcommand{\rmS}{\mathrm{S}}
\newcommand{\rmd}{\mathrm{d}}
\newcommand{\rme}{\mathrm{e}}
\newcommand{\rms}{\mathrm{s}}
\newcommand{\CR}{\mathrm{CR}}
\newcommand{\CD}{\mathrm{CD}}
\newcommand{\CQ}{\mathrm{CQ}}
\newcommand{\HR}{\mathrm{HR}}
\newcommand{\HQ}{\mathrm{HQ}}
\newcommand{\bfG}{\mathbf{G}}
\newcommand{\bfM}{\mathbf{M}}
\newcommand{\bfQ}{\mathbf{Q}}
\newcommand{\bfR}{\mathbf{R}}
\newcommand{\bfS}{\mathbf{S}}
\newcommand{\bfT}{\mathbf{T}}
\newcommand{\FQ}{\mathrm{FQ}}
\newcommand{\FR}{\mathrm{FR}}
\renewcommand{\phi}{\varphi}
\renewcommand{\emptyset}{\text{\O}}
\DeclareMathOperator{\id}{id}
\DeclareMathOperator{\Ab}{Ab}
\DeclareMathOperator{\sAb}{sAb}
\DeclareMathOperator{\Der}{Der}
\DeclareMathOperator{\Ext}{Ext}
\DeclareMathOperator{\Hom}{Hom}
\DeclareMathOperator{\Mor}{Mor}
\newtheorem{theorem}{Theorem}
\newtheorem*{theorem_without_number}{Theorem}
\newtheorem{proposition}[theorem]{Proposition}
\newtheorem{lemma}[theorem]{Lemma}
\theoremstyle{definition}
\newtheorem{definition}[theorem]{Definition}
\newtheorem{example}[theorem]{Example}
\newtheorem{examples}[theorem]{Examples}
\newtheorem{remark}[theorem]{Remark}
\numberwithin{theorem}{section}
\numberwithin{equation}{section}
\numberwithin{figure}{section}
\title{Quandle cohomology is a Quillen cohomology}
\author{Markus Szymik}
\date{\mydate\today}
\begin{document}

\maketitle

\renewcommand{\abstractname}{}

\vspace{-2\baselineskip}

\begin{abstract}
\noindent Abstract: 
Racks and quandles are fundamental algebraic structures related to the topology of knots, braids, and the Yang--Baxter equation. We show that the cohomology groups usually associated with racks and quandles agree with the Quillen cohomology groups for the algebraic theories of racks and quandles, respectively. This makes available the entire range of tools that comes with a Quillen homology theory, such as long exact sequences (transitivity) and excision isomorphisms (flat base change).

\vspace{\baselineskip}
\noindent MSC: 
18G50 
(18C10, 
20N02, 
55U35, 
57M27) 

\vspace{\baselineskip}
\noindent Keywords:
Racks, quandles, Quillen homology, transitivity, flat base change
\end{abstract}


\section*{Introduction}

Racks and quandles are fundamental algebraic structures related to the topology of knots, braids, and the Yang--Baxter equation. For instance, they completely classify knots in the~$3$-dimensional sphere~\cite{Joyce, Matveev}. Of course, from another angle, this implies that racks and quandles are at least as complicated as knots, urging us to search for invariants of racks and quandles in order to better understand the algebra until it becomes easier to apply in topology. 

For instance, pioneering work of Fenn, Rourke, and Sanderson, see~\cite{FRS:1,FRS:2,FRS:3,FRS:4}, led to the definition of homology and cohomology groups of racks. Carter, Jelsovsky, Kamada, Langford, and Saito~\cite{CJKLM} defined similar invariants more suitable specifically for quandles. In both cases, the authors present an explicit chain complex, and then pass to its~(co)homology. These invariants have found numerous applications, not only to knot theory. See~\cite{AG, Eisermann, EG} for instance, as well as \cite{Clauwens,Nosaka2,PP,NP,PY,Jackson} for various other approaches to rack and quandle~(co)homology groups.

One may wonder if these homology {\em groups} of racks and quandles can be better supported by a homology {\em theory} that would allow for computations of the homology of racks and quandles to a similar extend as for the homology of groups and spaces, say. It is the purpose of this writing to describe such a theory.

We will pursue a conceptual approach to homology theories for racks and quandles that goes back, even in much greater generality, to Quillen~\cite{Quillen:HA, Quillen:summary, Quillen:notes}. This is not based on explicitly prescribed chain complexes, but rather on the flexibility of choosing resolutions. It therefore makes an entire toolkit of homotopical machinery available to those who dare to get their hands dirty with such. The relevance is given by our Comparison Theorems~\ref{main:racks} and~\ref{main:quandles}. We paraphrase for the purposes of this introduction:

\begin{theorem_without_number}
There is a natural isomorphism between the quandle cohomology and the Quillen cohomology for the theory of quandles with coefficients in all abelian groups, up to a degree shift. The same is true for racks, {\it mutatis mutandis}.
\end{theorem_without_number}

The necessity to shift the degrees is not surprising: There is also a natural isomorphism between group cohomology and Quillen cohomology for the theory of groups, up to the same shift. As a warning, we point out that the comparison with group cohomology, which is often useful, can just as often be misleading. For instance, whereas every simplicial group is fibrant~(satisfies the Kan extension condition) as a simplicial set, we will see~(Remark~\ref{rem:Moore}) that not every simplicial quandle is fibrant.

As for the coefficients, this is actually another advantage of Quillen's theory: It automatically comes with a fairly general class of coefficients. Rather than just ordinary abelian groups, Quillen cohomology allows for coefficients in abelian group objects in categories that naturally come along with the situation under investigation. This is a well-known phenomenon in algebraic topology: The cohomology of a space can have coefficients not just in abelian groups. Instead, twisted coefficients (i.e.~modules over the fundamental groupoid) are also very common. They can be interpreted as locally trivial abelian sheaves on the given space, and then it is only a small step towards accepting all abelian (pre)sheaves as coefficients for cohomology. Analogous statements are true for racks and quandles:

The natural coefficients for Quillen cohomology in general are the Beck modules. In the case of racks and quandles, these Beck modules have already been discussed by Jackson~\cite{Jackson}. In the prequel~\cite{Szymik:Alexander} to this paper, we showed that a canonical refinement of the Alexander module of a knot, the Alexander--Beck module, detects the unknot. This process is  algebraically meaningful in the sense that there is a functor on the category of quandles that associates to a knot quandle the Alexander--Beck module of the knot. Quillen homology, as discussed below, generalizes this construction. It leads to a sequence of derived functors of that functor, and these give invariants of knots of which the Alexander--Beck module of~\cite{Szymik:Alexander} is only the first~(or rather zeroth) one. Since that one already detects the unknot, one might wonder if its derived functors, the Quillen homology discussed here, classify all knots.

Quillen homology is a homology theory in the sense that it comes with long exact~(transitivity) sequences, excision isomorphisms~(flat base change), and resulting Mayer--Vietoris sequences. These properties are well-known from algebraic topology and from the Andr\'e--Quillen homology of commutative rings. Our main comparison theorems make these results applicable for rack and quandle homology. This will be explained in Section~\ref{app}. 
The notion of flatness required is due to Goerss and Hopkins, see~\cite{Hill+Hopkins+Ravenel}. Remark~\ref{rem:flat} comments on problems that remain on our way towards a better understanding of the homotopy theory of racks and quandles, and flatness in particular. Such a homotopical (rather than homological) analysis of the situation, while highly desirable, would require substantially different methods than the ones in this paper.


This paper is outlined as follows. In Section~\ref{sec:Quillen}, we recall Quillen's cohomology for general algebraic theories, and in Section~\ref{sec:r+q} we recall the usual homology and cohomology groups defined for racks and quandles. Both of these feed into Section~\ref{sec:comparison}, where we show that both of these cohomology groups are isomorphic, up to a shift. The final Section~\ref{app} explains the benefits of having a homology theory and the need for homotopical foundations in the context of racks and quandles.


\section{Quillen cohomology}\label{sec:Quillen}

This section contains a brief review of Quillen homology in a generality appropriate for the later sections. Quillen's original writings \cite[Sec.~II.4, II.5]{Quillen:HA}, \cite[Sec.~2]{Quillen:summary}, and~\cite[Sec.~I.1--I.3]{Quillen:notes} are still excellent references for this. 

As for the generality, a suitable context is given by (one-sorted) algebraic theories in the sense of Lawvere~\cite{Lawvere}. Basic examples of algebraic theories are the theory of groups, the theory of rings, the theory of sets with an action of a given group~$G$, the theory of modules over a given ring~$A$, the theory of Lie algebras, and not to forget the initial theory of sets. In the following more specific sections, we will be concerned with the theories of quandles and racks. We refer to~\cite{Szymik:Center} and~\cite{Szymik:Alexander} for more details.

For any algebraic theory, the category~$\bfT$ of models (or algebras) is complete, cocomplete, and has a `small' and~`projective' generator: the free model on one generator. The class of `effective epimorphisms' agrees with the class of surjective homomorphisms. 

We will write~$\bfS$,~$\bfG$,~$\bfR$, and~$\bfQ$ for the category of sets, groups, racks, and quandles, respectively. There are obvious forgetful functors that all have left adjoints.
\[
\bfS\longrightarrow
\bfR\longrightarrow
\bfQ\longrightarrow
\bfG
\]
Whenever we pick a category~$\bfT$ of models for an algebraic theory, the reader is invited to choose any of these examples for guidance.

The basic motivation for homology theories (Quillen's or others') is the desire to pass from non-linear or non-abelian problems to linear and abelian problems in a derived (or homotopy invariant) way. In order to do so, we will first recall what homotopy theory usually means for a given algebraic theory.


\subsection{Quillen model categories}

Quillen~\cite[I.1]{Quillen:HA} introduced model categories as an axiomatic framework for contexts with a notion of equivalence that is weaker than isomorphism, and that allows for flexibility in the choice of resolutions, as familiar from homotopy theory and homological algebra.

\begin{definition}
A {\em Quillen model category} is a category~$\bfM$ with three distinguished classes of
maps: {\em weak equivalences}, {\em fibrations}, and {\em cofibrations}, each closed under composition and containing all identity maps. A map which is both a~(co)fibration and a weak equivalence is called an {\em acyclic (co)fibration}. There are five axioms:

(MC1) Finite limits and colimits exist in the category~$\bfM$.

(MC2) If~$f$ and~$g$ are morphisms in~$\bfM$ such that their composition~$gf$ is defined and if two of the three maps~$f$,~$g$, and~$gf$ are weak equivalences, then so is the third.

(MC3) If the morphism~$r$ in~$\bfM$ is a retract of the morphism~$f$ in~$\bfM$, and if~$f$ is a cofibration, fibration, or a weak equivalence, then so is~$r$.

(MC4) Given a commutative diagram 
\[
\xymatrix{
A\ar[d]_i\ar[r]& X\ar[d]^p\\
B\ar[r]\ar@{-->}[ur]& Y
}
\]
in~$\bfM$, a lift exists if~$i$ is a cofibration,~$p$ is a fibration, and at least one of them is acyclic.

(MC5) Any morphism~$f$ in~$\bfM$ can be factored~$f=pi$ in two ways: So that the morphism~$i$ is a cofibration and~$p$ is an acyclic fibration, and also so that~$i$ is an acyclic cofibration and~$p$ is a fibration.

An object~$X$ in a Quillen model category is {\em fibrant} if the unique morphism~\hbox{$X\to\star$} to the terminal object~$\star$ is a fibration, and it is {\em cofibrant} if the unique morphism~\hbox{$\emptyset\to X$} from the initial object~$\emptyset$ is a cofibration.
\end{definition}


\subsection{Homotopy theories of simplicial objects}\label{sec:sTasQuillen}

Given an algebraic theory, let again~$\bfT$ denote its category of models (or algebras). Quillen has shown \cite[II.4]{Quillen:HA} that there is a simplicial model structure on the category~$\rms\bfT$ of simplicial objects in~$\bfT$ such that the weak equivalences and fibrations are lifted from the Kan--Quillen model structure on the category~$\rms\bfS$ of simplicial sets~(spaces). In other words, the weak equivalences and fibrations are the same as those in simplicial sets, once we forget about the fact that they were simplicial morphisms. See also the exposition given by Rezk~\cite{Rezk}.

\begin{remark}\label{rem:free}
The cofibrations can be characterized as retracts of `free' maps, as in~\cite[Thm.~6.1]{Kan} and~\cite[II.4]{Quillen:HA}. The forgetful functor~$R\colon\bfT\to\bfS$ to the category~$\bfS$ of sets has a left adjoint~$L$, which sends a set to a free model generated by that set. A map~\hbox{$f_\bullet\colon X_\bullet\to Y_\bullet$} of simplicial models is {\em free} if and only if there are subsets~$B_p\subseteq Y_p$ for each~$p$ such that these are preserved by the degeneracies of~$Y$, and such that the canonical morphisms~$X_p+LB_p\to Y_p$ from the categorical sum of~$X_p$ with the free model~$LB_p$ on~$B_p$ induced by~$f_p$ and the inclusion is an isomorphism onto~$Y_p$ for all~$p$.
\end{remark}

More generally, and this will be needed when we want to have the best coefficients available, there are also model structures on the slice category~$\rms\bfT_X$, the category of simplicial models over~$X$, lifted again from the Quillen model structure on the category~$\rms\bfT$ specified above. Here, the cofibrations, the fibrations, and the weak equivalences are the same as those of~$\rms\bfT$, once we forget the structure morphisms down to~$X$. 

\begin{remark}\label{rem:Moore}
Not every simplicial set is fibrant, but Moore~\cite[Thm.~3]{Moore} has shown that every simplicial group is. Since the categories of racks and quandles sit between the categories of sets and groups, we may raise the question if every rack or at least every quandle is fibrant. However, every simplicial set admits the structure of a simplicial quandle with the trivial quandle structure, and therefore not even every quandle is fibrant. Or course, the conjugation quandle of every simplicial group is fibrant, by Moore's theorem.
\end{remark}


\subsection{The cotangent complex}

For a model~$X$ in~$\bfT$, there is a left adjoint~$\Omega_X$ of the forgetful functor from the category~$\Ab(\bfT_X)$ of abelian group objects in~$\bfT_X$~(the {\em Beck modules} over~$X$) back to the category~$\bfT_X$: the abelianization functor~(relative to the object~$X$). This is explained in detail in~\cite{Szymik:Alexander}, and we will use the notation established there without further mention.

\begin{proposition}\label{prop:formula}
If~$X+Y$ is the categorical sum of~$X$ and~$Y$, with inclusions
\[
i\colon X\longrightarrow X+Y\longleftarrow Y\colon j,
\]
then we have an isomorphism
\[
\Omega(X+Y)\cong i_*\Omega(X)\oplus j_*\Omega(Y)
\]
of Beck modules over~\hbox{$X+Y$}.
\end{proposition}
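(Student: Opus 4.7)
The strategy is to use that $\Omega_{X+Y}\colon \bfT_{X+Y}\to\Ab(\bfT_{X+Y})$ is a left adjoint and therefore preserves coproducts, combined with the observation that $X+Y$, viewed as the terminal object of its own slice category, is itself a coproduct there.

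First I would verify the coproduct claim in the slice. In any category with binary coproducts, the coproduct of $(B,f)$ and $(C,g)$ in the slice $\bfT_A$ is represented by $(B+C,[f,g])$. Applied to $A=X+Y$ with $f=i$ and $g=j$, the copairing is $[i,j]=\id_{X+Y}$ by the very definition of the coproduct inclusions. Hence $(X+Y,\id)\cong (X,i)+(Y,j)$ in $\bfT_{X+Y}$, and applying the left adjoint $\Omega_{X+Y}$ gives
\[
\Omega(X+Y)\;\cong\;\Omega_{X+Y}(X,i)\,\oplus\,\Omega_{X+Y}(Y,j).
\]

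Second, I would identify $\Omega_{X+Y}(X,i)\cong i_*\Omega(X)$, and symmetrically for $Y$. This is a naturality (change of base) statement: abelianization commutes with the push-forward along $i$. I would prove it by uniqueness of adjoints. The composite $\Omega_{X+Y}\circ i_*$, where $i_*\colon\bfT_X\to\bfT_{X+Y}$ is post-composition with $i$, has right adjoint $i^*\circ U_{X+Y}$, where $i^*$ denotes pullback along $i$ and $U$ the forgetful functor from Beck modules. On the other hand, $i_*\circ\Omega_X$ has right adjoint $U_X\circ i^*$, where now $i^*$ is the pullback on Beck modules. Because pullback along $i$ preserves finite limits, it preserves abelian group objects strictly, so the two composite right adjoints coincide; hence so do their left adjoints, up to canonical natural isomorphism. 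Substituting into the decomposition above yields the claimed formula.

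The step I expect to be the main obstacle is the second one, the commutation of $\Omega$ with $i_*$, since it requires tracking several adjoint pairs at once. The argument via uniqueness of adjoints is formal once one verifies carefully that $i^*$ commutes with the forgetful functor from Beck modules to the slice, which in turn reduces to $i^*$ preserving finite products. The rest is a direct application of the fact that left adjoints preserve coproducts, together with the basic structure of slice categories.
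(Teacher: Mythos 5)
Your proof is correct and in substance the same as the paper's: the paper verifies the universal property by a single chain of Hom-isomorphisms plus Yoneda, and that chain is exactly your two steps unwound --- the first and last links encode that the left adjoint $\Omega_{X+Y}$ preserves the slice-category coproduct $(X,i)+(Y,j)\cong(X+Y,\id)$, while the middle links encode the base-change isomorphism $\Omega_{X+Y}\circ i_\sharp\cong i_*\circ\Omega_X$ that you obtain from uniqueness of adjoints. Both presentations rest on the same adjunctions, so nothing further is needed.
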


\begin{proof}
This can be checked by inspection of the universal property, and for lack of reference we do this here. If~$M$ is a Beck module over the sum~\hbox{$X+Y$}, then the calculation
\begin{align*}
\Hom_{X+Y}(i_*\Omega(X)\oplus j_*\Omega(Y),M)
\cong&\Hom_{X+Y}(i_*\Omega(X),M)\oplus\Hom_{X+Y}(j_*\Omega(Y),M)\\
\cong&\Hom_X(\Omega(X),i^*M)\oplus\Hom_Y(\Omega(Y),j^*M)\\
\cong&\Mor_X(X,i^*M)\oplus\Mor_Y(Y,j^*M)\\
\cong&\Mor_{X+Y}(X+Y,M)\\
\cong&\Hom_{X+Y}(\Omega(X+Y),M)
\end{align*}
shows that both the objects~$i_*\Omega(X)\oplus j_*\Omega(Y)$ and~$\Omega(X+Y)$ represent the same functor, and are therefore isomorphic by the Yoneda Lemma.
\end{proof}

In order to derive the relative abelianization functor~$\Omega_X$, we apply it level-wise to a cofibrant replacement~$F_\bullet\to X$ of~$X$ in the model category~$\rms\bfT$, or equivalently of~$\id_X$ in the model category~$\rms\bfT_X$, or in still other words, to a (retract of~a) free simplicial model that is equivalent to~$X$. Then
\[
\bbL_X(X)=\Omega_X(F_\bullet)\in\sAb(\bfT_X)
\]
is the {\it cotangent complex} of~$X$. This is determined by~$X$ only up to weak equivalence that is unique up to homotopy, as usual in homotopical (or homological) algebra.

The homotopy~`groups'~$\pi_n\bbL_X(X)$ are the {\it Quillen homology} objects, and they live in the abelian category~$\Ab(\bfT_X)$ of Beck modules over~$X$. They can be computed by passing from the simplicial abelian group object~$\Omega_X(F_\bullet)$ to its associated Moore chain complex, which has the same object~$\Omega_X(F_p)$ in each degree~$p$, and where the differential is formed using the simplicial boundaries and the usual alternating sum formula.

\begin{example}
If the object~$X$ of~$\bfT$ is already free, then the identity is a cofibrant resolution, and~$\Omega_X(X)$ is a model for~$\bbL_X(X)$, which is, therefore, homotopically discrete. 
\end{example}

\begin{example}
We always have~$\pi_0\bbL_X(X)\cong\Omega_X(X)$. The main object of study in~\cite{Szymik:Alexander} was the~$X$-module~$\Omega_X(X)$, for the theory of quandles and  the knot quandle~\hbox{$X=\rmQ K$} of a knot~$K$. This was termed the {\em Alexander--Beck module} of~$K$. The higher Quillen homology objects~$\pi_n\bbL_{\rmQ K}(\rmQ K)$ are the derived higher Alexander--Beck modules of the knot~$K$.
\end{example}

\begin{example}
There are some theories where the cotangent complexes~$\bbL_X(X)$ are always homotopically discrete. In these situations, the canonical homomorphism~\hbox{$\bbL_X(X)\to\pi_0\bbL_X(X)\cong\Omega_X(X)$} is always a weak equivalence. An example is given by the theory of groups. If~$X=G$ is a group, then~$\Ab(\bfG_G)$ is equivalent to the category of modules over the integral group ring~$\bbZ G$ of~$G$, and~$\bbL_G(G)$ is equivalent to~$\Omega_G(G)$ which corresponds to~$\rmI G$, the augmentation ideal in~$\bbZ G$. Compare with Quillen's notes~\cite[II.5]{Quillen:HA} or Frankland's exposition~\cite[Sec. 5.1]{Frankland}, for instance.
\end{example}

\begin{remark}\label{rem:first_on_resolutions}
The composition~$LR$ of the forgetful functor~$R$ with the free functor~$L$, see Remark~\ref{rem:free} again, defines a cotriple~(or comonad) on the category~$\bfT$. This produces canonical cofibrant resolutions~$F_\bullet\to X$ with~$F_n=(LR)^{n+1}X$, and it can, therefore, in theory~(see~\cite[II.5]{Quillen:HA}) be used to compute Quillen homology.
\end{remark}

We will now recall the definition of Quillen cohomology and see that it is the result of deriving derivations, see Remark~\ref{rem:derder}. 


\subsection{Quillen cohomology and Ext}

Let~$X$ be a model of~$\bfT$, an algebraic theory. Let~$M$ be an~$X$-module in the sense of Beck.~(We refer again to~\cite{Jackson} and~\cite{Szymik:Alexander} for the theory of Beck modules with an emphasis on racks and quandles.) In other words, that~$X$-module~$M$ is an abelian group object in the slice category~$\bfT_X$.

\begin{definition}
The Quillen cohomology of~$X$ with coefficients in~$M$ is defined as the cohomology of the cochain complex~$\Hom_{\Ab(\bfT_X)}(\bbL_X(X),M)$ of abelian groups.
\[
\rmD^p_X(X;M)=\pi^p\Hom_{\Ab(\bfT_X)}(\bbL_X(X),M)
\]
\end{definition}

We note that the subscript~$X$ to the left looks redundant, but we will keep it, at least for a while, to remind ourselves of the fact that we are working in the relative situation~(over~$X$).


\begin{remark}
Quillen~\cite[Prop.~2.4~(ii)]{Quillen:summary} has shown that there is a universal coefficient spectral sequence
\[
\rmE_2^{p,q}\cong\Ext_{\Ab(\bfT_X)}^p(\pi_q\bbL_X(X),M)\Longrightarrow\rmD^{p+q}_X(X;M)
\]
of cohomological type. This means that the differentials go as follows.
\[
\rmE_r^{p,q}\longrightarrow\rmE_r^{p+r,q-r+1}
\]
This spectral sequence comes with horizontal edge homomorphisms
\begin{equation}\label{eq:edge}
\Ext_{\Ab(\bfT_X)}^p(\Omega_X(X),M)\longrightarrow\rmD^p_X(X;M)
\end{equation}
and vertical edge homomorphisms
\[
\rmD^q_X(X;M)\longrightarrow\Hom_{\Ab(\bfT_X)}(\pi_q\bbL_X(X),M).
\]
\end{remark}

\begin{remark}
One might be tempted to think of the source
\[
\Ext_{\Ab(\bfT_X)}^p(\Omega_X(X),M)
\]
of the horizontal edge homomorphism~\eqref{eq:edge} as an appropriate cohomology theory for~$X$, and sometimes it is: in those cases where the edge homomorphism is an isomorphism. In all other cases, the right hand side---Quillen's---is the theory of choice.~(Admittedly, the Ext groups are typically easier to compute.) If in doubt about whether to agree with this bias, the reader is encouraged to think through the consequences for the algebraic theories of sets and groups.
\end{remark}

\begin{remark}\label{rem:derder}
In low degrees there is an isomorphism
\[
\rmD^0_X(X;M)\cong\Hom_{\Ab(\bfT_X)}(\Omega_X(X),M)=\Der_X(X;M),
\]
so that the Quillen cohomology groups are the (non-linear) derived functors of the derivations. There is also an exact sequence
\[
0\longrightarrow\Ext_{\Ab(\bfT_X)}^1(\Omega_X(X),M)
\longrightarrow\rmD^1_X(X;M)
\longrightarrow\Hom_{\Ab(\bfT_X)}(\pi_1\bbL_X(X),M)
\]
that need not be surjective to the right due to the differential~$\rmd_2$ on the~$\rmE_2$ page with codomain~$\Ext_{\Ab(\bfT_X)}^2(\Omega_X(X),M)$.
\end{remark}

\begin{remark}
Because of the nature of the coefficients as certain functors with values in abelian groups, Quillen cohomology is also a form of sheaf cohomology, see \cite[II.5]{Quillen:HA}. This might be valuable to know for some purposes, but we will not make use of this fact here.
\end{remark}

\begin{remark}
So far, given a~$\bfT$--model~$X$, we have only considered its Quillen {\em homology objects}~$\pi_*\bbL_X(X)$, which are Beck modules over~$X$, and its Quillen {\em cohomology groups}~$\rmD^*_X(X;M)$ with coefficients in a Beck module~$M$, which are abelian groups. The former determine the latter up to the universal coefficient spectral sequence, and are therefore conceptually preferable, but the latter take values in a more familiar category, and are therefore better suited for computations and comparisons. Consequently, our main Theorems~\ref{main:racks} and~\ref{main:quandles} are stated in terms of cohomology. That being said, it is also possible to introduce coefficients to define Quillen {\em homology groups}, so that the values are abelian groups. In order to get the variance right, this required coefficients in the {\em opposite} category of Beck modules. We shall not mention this any more, but shall work with the Quillen homology objects whenever we are able to, and with the Quillen cohomology groups whenever we have to.
\end{remark}


\section{Rack and quandle cohomology}\label{sec:r+q}

In this section we recall the necessary facts about racks, quandles, and the homology groups that are usually defined for them. This is not intended as a comprehensive survey; we only present what will be needed for our main comparison result in the next section.


\subsection{Racks and their cohomology}

A {\em rack} is a set together with a binary operation~$\rhd$ such that left multiplication with any element is an automorphism. 

Given a rack~$X$, several authors have defined chain complexes that define its homology. See the introduction and below for specific references. All of these variants have lead to isomorphic homology groups, up to at most a shift. To remain self-contained, here is the full version that we will be using. 

We construct a chain complex~$\CR_\bullet(X)$ of abelian groups as follows. The~$n$-th group~$\CR_n(X)$ depends only on the underlying set of~$X$: it is free with basis~$X^n$, the set of~$n$-tuples of elements in~$X$. The differential~$\delta$ is assembled from the homomorphisms~(obtained by linearly extending)
\begin{align*}
\delta_j^0(x_1,\dots,x_n)&=(x_1,\dots,\widehat{x_j},\dots,x_n)\\
\delta_j^1(x_1,\dots,x_n)&=(x_1,\dots,x_{j-1},x_j\rhd x_{j+1},\dots,x_j\rhd x_n)
\end{align*}
by virtue of the usual alternating sum formula
\[
\delta^i=\sum_{j=1}^n(-1)^j\delta^i_j
\]
and
\[
\delta=\delta^0-\delta^1.
\]

\begin{examples}
In the first two interesting cases, we get~$\delta(x)=0$ for the differential~\hbox{$\CR_0(X)\leftarrow\CR_1(X)$} and
\begin{equation}\label{eq:d}
\delta(x,y)=y-x\rhd y
\end{equation}
for the differential~$\CR_1(X)\leftarrow\CR_2(X)$.
\end{examples}

Both homomorphisms~$\delta^i$ are differentials, and they anti-commute, so that their difference~$\delta$ is also a differential:~$\delta^2=0$. 

\begin{definition}
The homology~$\HR_*(X)$ of the chain complex~$\CR_\bullet(X)$ is the {\em rack homology} of~$X$. 
\end{definition}

If~$A$ is an abelian group then the homology of the chain complex~$\CR_\bullet(X)\otimes A$ is the homology~$\HR_*(X;A)$ of~$X$ with coefficients in~$A$, and the cohomology~$\HR^*(X;A)$ of~$X$ with coefficients in~$A$ is defined as the cohomology of the cochain complex~$\Hom(\CR_\bullet(X),A)$, where~$\Hom$ refers to the group of homomorphisms of abelian groups. 

For instance, from~\eqref{eq:d} we immediately see that~$\HR_0(X)=\bbZ$ and~$\HR_1(X)$ is the free abelian group on the set of orbits of~$X$. Also, we clearly have~$\HR^0(X;A)=A$, and
\[
\HR^1(X;A)=\{\,f\colon X\to A\,|\,f(y)=f(x\rhd y)\,\},
\]
so that the first cohomology is given by the functions with values in~$A$ that are constant on the orbits of~$X$. Further on, the group~$\HR^2(X;A)$ is generated by the~$2$-cocycles. These are maps~$\phi\colon X\times X\to A$ such that
\[
\phi(x\rhd y, x\rhd z) = \phi(y,z)-\phi(x,z)+\phi(x,y\rhd z)
\]
holds for all elements~$x$,~$y$, and~$z$. These~$2$-cocycles are considered modulo the subgroup of principal examples, those that are of the form~\hbox{$\phi_f(x,y)=f(y)-f(x\rhd y)$} for some function~$f\colon X\to A$.

The following result will be used in the proof of our main comparison theorem.

\begin{lemma}\label{lem:lemma_racks}
If~$F_\bullet\to X$ is a weak equivalence between simplicial racks with~$X$ discrete, then for each integer~$p$ the Moore chain complex~$\CR_p(F_\bullet)$ is a free resolution of the abelian group~$\CR_p(X)$. 
\end{lemma}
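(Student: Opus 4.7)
The plan is to interpret $\CR_p(F_\bullet)$ as a simplicial abelian group, check that it is weakly equivalent to the constant simplicial abelian group on $\CR_p(X)$, and observe that it is degreewise free.

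First I would note that since $\CR_p(Y) = \bbZ[Y^p]$ depends only on the underlying set of the rack $Y$, applying $\CR_p$ level-wise to $F_\bullet$ produces the simplicial abelian group $\bbZ[F_\bullet^p]$: the free abelian group on the $p$-fold Cartesian product of the underlying simplicial set of $F_\bullet$. Each level $\CR_p(F_n) = \bbZ[F_n^p]$ is then obviously a free abelian group, so degreewise freeness is automatic, and the work reduces to showing that the Moore chain complex of $\bbZ[F_\bullet^p]$ is a resolution of $\bbZ[X^p] = \CR_p(X)$, i.e.\ that $\pi_0 \cong \bbZ[X^p]$ and $\pi_n = 0$ for $n \geq 1$.

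Next I would appeal to the description of the model structure on $\rms\bfR$ recalled in Section~\ref{sec:sTasQuillen}: the given weak equivalence $F_\bullet \to X$ remains a weak equivalence upon forgetting to simplicial sets. The strategy is then to chain two facts. First, $F_\bullet^p \to X^p$ is a weak equivalence of simplicial sets; I would deduce this by passing to geometric realizations, where Milnor's theorem gives $|F_\bullet^p| \cong |F_\bullet|^p$, and a finite product of weak homotopy equivalences of spaces is again a weak homotopy equivalence. Second, the free abelian group functor $\bbZ[-]$ sends weak equivalences of simplicial sets to weak equivalences of simplicial abelian groups, because under the Moore/Dold--Kan correspondence one has $\pi_n\bbZ[Y] \cong H_n(Y;\bbZ)$. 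Composing, the comparison $\bbZ[F_\bullet^p] \to \bbZ[X^p]$ is a weak equivalence, and since the target is a constant simplicial abelian group, this forces $\pi_0 \CR_p(F_\bullet) \cong \CR_p(X)$ and $\pi_n \CR_p(F_\bullet) = 0$ for $n\geq 1$. Together with the degreewise freeness, this yields the required free resolution.

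The main obstacle I expect is the stability of weak equivalences under Cartesian powers, which does not hold on the nose in the Kan--Quillen model structure without some fibrancy assumption (and as Remark~\ref{rem:Moore} warns, fibrancy is not automatic for simplicial racks). This is precisely why I plan to route the argument through topological spaces via Milnor's theorem, rather than arguing purely simplicially. An alternative, should a more intrinsic argument be desired, would be to decompose $F_\bullet = \coprod_{x\in X} F_\bullet^x$ over the discrete set $X$, observe that each fiber $F_\bullet^x$ has weakly contractible realization, and read off the vanishing of positive homotopy of $\bbZ[F_\bullet^p]$ from the product decomposition $F_\bullet^p = \coprod_{(x_1,\dots,x_p)\in X^p} F_\bullet^{x_1}\times\cdots\times F_\bullet^{x_p}$.
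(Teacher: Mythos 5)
Your proposal is correct and follows essentially the same route as the paper's proof: observe that $\CR_p(F_q)=\bbZ[F_q^p]$ is free, that $F_\bullet\to X$ is a weak equivalence of underlying simplicial sets whose $p$-th Cartesian power is still a weak equivalence, and that the free abelian group functor preserves weak equivalences. The extra care you take in justifying the stability of weak equivalences under finite products (via geometric realization) only fills in a step the paper states without comment.
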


\begin{proof}
The abelian groups~$\CR_p(F_q)\cong\bbZ F_q^p$ are free by construction. (Note that the abelian group~$\CR_p(X)$ is also free, but this will only become important later, not during this proof.) Since~$F_\bullet\to X$ is a weak equivalence (of spaces), so is~$F_\bullet^p\to X^p$ for any given~$p$. Since the free abelian group functor preserves weak equivalences, it follows that~$\CR_p(F_\bullet)$ is equivalent to~$\CR_p(X)$, as claimed.
\end{proof}

\subsection{Rack spaces}

In analogy with the classifying space~$\rmB G$ of a group~$G$, Fenn, Rourke, and Sanderson \cite{FRS:1, FRS:2, FRS:3, FRS:4} have introduced the {\rm rack space}~$\rmB X$ of a rack~$X$. It has a single~$0$-cell~$\star$, and for every element~$x$ of~$X$ a~$1$-cell~$\rme(x)$ attached to it. Then for every pair~$(x,y)$ of elements there is a square
\[
\xymatrix@C=4em@R=4em{
\star\ar[r]^{\rme(y)}\ar[d]_{\rme(x)}&\star\ar[d]^{\rme(x)}\\
\star\ar[r]_{\rme(x\rhd y)}&\star
}
\]
attached as indicated, so that the path~$\rme(x)\rme(y)\rme(x)^{-1}$ has a preferred homotopy to the path~\hbox{$\rme(x\rhd y)$}, and so on. Thus, the fundamental group of~$\rmB X$ is isomorphic to the associated group of~$X$, and the first homology of~$\rmB X$ is the free abelian group on the set of orbits of~$X$. In general, as with the theory of groups, the higher homology groups of~$\rmB X$ can be identified with the rack homology:~\hbox{$\HR_*(X)\cong\rmH_{*}(\rmB X)$} for all~$*\geqslant 0$. 

\begin{example}\label{ex:FR}
The rack space of a free rack~$\FR_g$ on~$g$ generators is homotopy equivalent to a wedge~$\vee_g\rmS^1$ of~$g$ circles \cite[Thm.~5.13]{FRS:4}, so that the rack homology of~$\FR_g$ is
\[
\HR_*(\FR_g)\cong
\begin{cases}
	\bbZ & *=0\\
	\bbZ^g & *=1\\
	0 & *\geqslant2.
\end{cases}
\]
This has been confirmed by Farinati and the Gucciones, compare~\cite[Thm.~3.1.]{FGG}. \end{example}

\begin{example}\label{ex:large}
The rack space of the trivial rack with~$h$ elements, that is any set~$B$ with~$h$ elements and rack operation~$x\rhd y=y$, is homotopy equivalent to the loop space~$\Omega(\vee_h\rmS^2)$ on a wedge of~$h$ copies of the~$2$-sphere, see~\cite[Thm.~5.12]{FRS:4} again. As a consequence, the rack homology of the trivial rack with one element is fairly large. In fact, we have $\delta_j^1=\delta_j^0$ for a trivial rack~$B$, so that $\delta=0$ in the chain complex~$\CR_\bullet(B)$, and its homology is 
\[
\HR_p(B)=\CR_p(B)=\bbZ[B^p]\cong\bbZ[B]^{\otimes p}.
\]
We can also deduce this result from the rack space and the Bott--Samelson theorem~\cite{BS}: If $V$ is a connected space with torsion free homology, then the homology of $\Omega\Sigma V$ is the tensor algebra on the reduced homology of $V$; the hypotheses are satisfied if $V$ is a wedge of circles.
\end{example}

Trivial racks belong to a very special species of racks: they are all quandles, to which we turn next.


\subsection{Quandles and their cohomology}

A {\em quandle} is a rack~$X$ such that~$x\rhd x=x$ for all elements~$x$ in~$X$. 

\begin{remark}
See~\cite{Szymik:Center} for a conceptual approach to the operation~$x\mapsto x\rhd x$ that motivates the passage from racks to quandles from an entirely algebraic (or rather categorical) point of view.
\end{remark}

Since every quandle is a rack, the rack homology is defined for quandles in particular. Every non-empty quandle has the trivial quandle with one element as a retract. Therefore, Example~\ref{ex:large} implies that the rack homology of every quandle is rather large. This is one reason to think that rack homology is not the right object to consider for quandles. 

Carter, Jelsovsky, Kamada, Langford, and Saito \cite{CJKLM} observed that, given a quandle~$X$, the subgroups~\hbox{$\CD_n(X)\subseteq\CR_n(X)$} that are generated by the~$n$-tuples where at least two neighboring entries agree actually form a subcomplex~$\CD_\bullet(X)$ of the complex~$\CR_\bullet(X)$. Then we can pass to the quotient complex~\hbox{$\CQ_\bullet(X)=\CR_\bullet(X)/\CD_\bullet(X)$}. Litherland and Nelson~\cite[Thm.~4]{Litherland+Nelson} have shown that the surjection~$\CR_\bullet(X)\to\CQ_\bullet(X)$ admits a~(non-obvious) splitting. 

\begin{definition}
The {\em quandle homology}~$\HQ_*(X)$ of a quandle~$X$ is defined as the homology of the complex~$\CQ_\bullet(X)$.
\end{definition}

Coefficients~$A$ and cohomology are introduced in the usual way; we get~$\HQ_*(X;A)$ and~$\HQ^*(X;A)$.

\begin{example}\label{ex:FQ}
Farinati and the Gucciones \cite[Thm.~4.1]{FGG} have computed the homology of the free quandle~$\FQ_g$ on~$g$ generators. With our conventions, their result reads
\[
\HQ_*(\FQ_g)\cong
\begin{cases}
	\bbZ & *=0\\
	\bbZ^g & *=1\\
	0 & *\geqslant2.
\end{cases}
\]
Compare with Example~\ref{ex:FR}.
\end{example}

The following analog of Lemma~\ref{lem:lemma_racks} will be used in the next section.

\begin{lemma}\label{lem:lemma_quandles}
If~$F_\bullet\to X$ is a weak equivalence between simplicial racks with~$X$ discrete, then for each integer~$p$ the Moore chain complex~$\CQ_p(F_\bullet)$ is a free resolution of the abelian group~$\CQ_p(X)$. 
\end{lemma}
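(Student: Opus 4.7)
The plan is to exploit the defining short exact sequence
\[
0\longrightarrow\CD_p(F_\bullet)\longrightarrow\CR_p(F_\bullet)\longrightarrow\CQ_p(F_\bullet)\longrightarrow0
\]
of simplicial abelian groups, apply Lemma~\ref{lem:lemma_racks} to the middle term, and transport the conclusion to the quotient on the right through the resulting long exact sequence of Moore homology. This reduces the non-trivial half of the claim to the analogous statement for the simplicial subgroup~$\CD_p(F_\bullet)$.

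Freeness in each simplicial degree I would dispatch first. For any quandle~$Y$, the underlying set~$Y^p$ is the disjoint union of the degenerate locus~\hbox{$D_p(Y)=\bigcup_i D_p^i(Y)$} of tuples with two neighboring entries equal and its complement~$Q_p(Y)$, yielding an abelian-group direct-sum decomposition~\hbox{$\CR_p(Y)\cong\CD_p(Y)\oplus\bbZ[Q_p(Y)]$} and hence an identification~$\CQ_p(Y)\cong\bbZ[Q_p(Y)]$. Taking~$Y=F_q$ shows at once that each~$\CQ_p(F_q)$ is free abelian.

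For the Moore-homology half, I would run a Čech-nerve (Mayer--Vietoris) argument for the cover of~\hbox{$D_p(F_\bullet)\subseteq F_\bullet^p$} by the simplicial subsets~$D_p^i(F_\bullet)$,~\hbox{$i=1,\dots,p-1$}. For any non-empty~\hbox{$S\subseteq\{1,\dots,p-1\}$}, the intersection~\hbox{$D_p^S(F_\bullet):=\bigcap_{i\in S}D_p^i(F_\bullet)$} merges consecutive coordinates into~$p-|S|$ clusters and so is naturally isomorphic to~$F_\bullet^{p-|S|}$, and likewise with~$X$ replacing~$F_\bullet$. In each simplicial degree, the augmented Čech complex
\[
\cdots\longrightarrow\bigoplus_{|S|=k}\bbZ\bigl[D_p^S(F_\bullet)\bigr]\longrightarrow\cdots\longrightarrow\bigoplus_{|S|=1}\bbZ\bigl[D_p^i(F_\bullet)\bigr]\longrightarrow\CD_p(F_\bullet)\longrightarrow 0,
\]
with alternating-sum-of-inclusions differential, is exact, because at each point of the union it reduces to the augmented simplicial chain complex of a full (and hence contractible) simplex. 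By Lemma~\ref{lem:lemma_racks} applied to the identification~\hbox{$\bbZ[D_p^S(F_\bullet)]\cong\CR_{p-|S|}(F_\bullet)$}, every term of this complex has Moore homology concentrated in degree zero and equal to its counterpart in the analogous complex for~$X$. The two spectral sequences of the resulting Moore--Čech double complex then identify the Moore homology of~$\CD_p(F_\bullet)$ with that of~$\CD_p(X)$, concentrated in degree zero.

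The step I expect to be the main obstacle is the double-complex bookkeeping for~$\CD_p(F_\bullet)$, and in particular the natural product-structure identifications~\hbox{$D_p^S(Y)\cong Y^{p-|S|}$} through which Lemma~\ref{lem:lemma_racks} is invoked; everything else is a routine assembly. Once that is in hand, feeding the resulting resolution of~$\CD_p(X)$ together with Lemma~\ref{lem:lemma_racks} for~$\CR_p(F_\bullet)$ into the long exact sequence of Moore homology attached to the opening short exact sequence delivers~$\CQ_p(F_\bullet)$ as a free resolution of~$\CQ_p(X)$.
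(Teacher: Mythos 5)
Your argument is correct, but it takes a genuinely different route from the paper's. The paper disposes of the homotopy-invariance issue in one stroke: since the value of~$\CQ_p$ on a quandle depends only on the underlying set, the term-wise prolongation of~$\CQ_p$ to simplicial objects preserves weak equivalences by the Dold--Puppe theorem on prolongations of functors~\cite[Satz~1.15]{Dold+Puppe}, so~$\CQ_p(F_\bullet)\to\CQ_p(X)$ is an equivalence exactly as in Lemma~\ref{lem:lemma_racks}; freeness is read off from the basis of non-degenerate tuples, as you also do. You instead avoid the black box: you resolve the degenerate part~$\CD_p(F_\bullet)=\bbZ\bigl[\bigcup_iD_p^i(F_\bullet)\bigr]$ by the \v{C}ech complex of the cover by the subsets~$D_p^i$, identify each multi-intersection~$D_p^S$ with a power~$F_\bullet^{p-|S|}$ so that Lemma~\ref{lem:lemma_racks} applies term by term, and then feed the conclusion through the long exact sequence of the levelwise split short exact sequence~\hbox{$0\to\CD_p\to\CR_p\to\CQ_p\to0$}. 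This is longer and requires the double-complex bookkeeping you flag (all of which does go through: the pointwise ``full simplex'' argument for exactness of the augmented \v{C}ech complex and the compatibility of the clustering isomorphisms~$D_p^S(Y)\cong Y^{p-|S|}$ with the simplicial structure are both sound), but it is elementary and self-contained, and as a by-product it records that~$\CD_p(F_\bullet)$ is itself a free resolution of~$\CD_p(X)$, which the paper's argument does not. The paper's route is shorter and generalizes painlessly to any quotient construction that is functorial in the underlying set.
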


\begin{proof}
The proof is almost the same as the proof of Lemma~\ref{lem:lemma_racks}. The main difference is that the homotopy invariance of the functors~$\CQ_p$ on simplicial quandles is less visible in the present case; it follows from a general result about prolongations of functors, see Dold--Puppe~\cite[Satz~1.15]{Dold+Puppe}, for instance. That result says that the prolongation of any functor from the category of sets to itself to a functor from the category of simplicial sets to itself (by applying the functor term-wise) is homotopy invariant. In the present situation, the functor to consider is~$\CQ_p$, of course. As with the functor~$\CR_p$ in the proof of Lemma~\ref{lem:lemma_racks}, we use here that the value of the functor~$\CQ_p$ on a quandle also depends only on the underlying set of the quandle.
\end{proof}

\subsection{Quandle spaces}

Nosaka has introduced a `quandle space' that realizes the quandle homology of a quandle in the same way in that the rack space realizes the rack homology of a rack. See~\cite{Nosaka1} and his book~\cite{Nosaka3}. We will not make use of it here.


\section{The comparison}\label{sec:comparison}

Quillen knew that his cohomology for the theory of groups is up to a shift just the usual cohomology that is defined in homological algebra or by means of the classifying space:
\[
\rmD^*(G;A)\cong\rmH^{*+1}(G;A)\cong\rmH^{*+1}(\rmB G;A),
\]
see~\cite[II.5]{Quillen:HA}. Our main comparison results will give similar information for the theory of racks, and also for the theory of quandles. We only need one more observation about the coefficients in our situation:

The trivial quandles are the values of a `forgetful' functor from the category of sets to the category of quandles. (This is `forgetful' in the sense that it preserves the underlying sets.) And the category of quandles is included as a full subcategory of the category of racks. These functors preserve products and pass to abelian group objects. Therefore, every abelian group~$A$ pulls back to a Beck module~\hbox{$A\times X\to X$} over every given rack or quandle~$X$, as the case may be. For simplicity, let us continue to write~$A$ for this~$X$-module. Note that we get
\begin{equation}\label{eq:independent}
\Der_X(Y;A)\cong\bfR_X(Y,A\times X)\cong\bfR(Y,A)
\end{equation}
for all racks~$Y$ over~$X$, so that this does not depend on the morphism to~$X$.

\begin{theorem}\label{main:racks}
Let~$X$ be a rack, and let~$A$ be an abelian group. Then there is an isomorphism
\[
\HR^{*+1}(X;A)\cong\rmD^*_X(X;A)
\]
between the rack cohomology of~$X$
and the Quillen cohomology~(for the theory of racks) of~$X$.
\end{theorem}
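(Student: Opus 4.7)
The plan is to construct a bicomplex that sandwiches both cohomology theories between its two spectral sequences. I would first choose any cofibrant resolution $F_\bullet \to X$ in~$\rms\bfR$, so that by Remark~\ref{rem:free} each $F_q$ is a retract of a free rack. Then I would form the double complex
\[
E^{p,q} = \Hom(\CR_p(F_q), A),
\]
whose horizontal differential is dual to the rack boundary (applied level-wise in~$q$) and whose vertical differential is dual to the simplicial Moore differential (applied level-wise in~$p$). These commute because the rack boundary is natural under rack morphisms.

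Next I would compute the cohomology of the total complex in two ways. Taking vertical cohomology first, Lemma~\ref{lem:lemma_racks} says that for each fixed $p$ the complex $\CR_p(F_\bullet)$ is a free resolution of the free abelian group $\CR_p(X)$, so vertical cohomology collapses to $\Hom(\CR_p(X), A)$ concentrated in row $q = 0$; the remaining horizontal cohomology is $\HR^*(X;A)$, which therefore equals the total cohomology. Taking horizontal cohomology first, one obtains on row $p$ the cosimplicial abelian group $[q] \mapsto \HR^p(F_q; A)$. By Example~\ref{ex:FR} together with the universal coefficient theorem (the rack homology of a retract of a free rack is free abelian and vanishes above degree one), one has $\HR^p(F_q; A) = 0$ for $p \geq 2$. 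Row $p = 0$ is the constant cosimplicial group $A$, contributing $A$ only in total degree zero. For row $p = 1$, the key observation is the natural isomorphism
\[
\HR^1(Y; A) \cong \bfR(Y, A),
\]
valid for every rack $Y$: a $1$-cocycle $f \colon Y \to A$ is precisely a function satisfying $f(y) = f(x \rhd y)$, which is the condition that $f$ be a rack morphism into the trivial rack $A$. Combined with~\eqref{eq:independent}, this makes row $p = 1$ exactly the cosimplicial complex computing $\rmD^*_X(X;A)$. The potential differentials $d_r \colon E_r^{1,q} \to E_r^{1+r,q-r+1}$ for $r \geq 2$ all land in the vanishing range $p \geq 2$, so this spectral sequence degenerates at $E_2$.

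Matching the two computations of the total cohomology then yields $\HR^0(X;A) \cong A$ and $\HR^n(X;A) \cong \rmD^{n-1}_X(X;A)$ for $n \geq 1$, which is the claimed shifted isomorphism. The main obstacle I foresee is verifying the compatibility of the identification $\HR^1(F_q;A) \cong \bfR(F_q,A)$ with the cosimplicial face maps of~$F_\bullet$, so that the $E_2$-row $p=1$ is literally the Quillen cochain complex rather than merely an abstractly equivalent one; both structures come from precomposition with rack morphisms, so this should go through cleanly. Conceptually the harder step is simply spotting that the right bicomplex to form is $\Hom(\CR_\bullet(F_\bullet),A)$; once this is in place, Lemma~\ref{lem:lemma_racks} and the vanishing of higher rack homology of free racks do the rest.
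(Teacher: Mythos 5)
Your proposal is correct and follows essentially the same route as the paper: the double complex $\Hom(\CR_\bullet(F_\bullet),A)$, the collapse of one spectral sequence via Lemma~\ref{lem:lemma_racks} and freeness of $\CR_p(X)$, and the collapse of the other via Example~\ref{ex:FR} and the identification $\HR^1(F_q;A)\cong\bfR(F_q,A)\cong\Der_X(F_q;A)$ from~\eqref{eq:independent}. The only cosmetic difference is that you work with a general cofibrant (hence level-wise retract-of-free) resolution and invoke functoriality of rack homology under retracts, whereas the paper simply fixes a level-wise free resolution from the start.
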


\begin{proof}
Let~$F_\bullet\to X$ be a weak equivalence of simplicial racks with~$F_\bullet$ level-wise free and~$X$ discrete. We can apply the rack complex construction~$\CR_\bullet(?)$ level-wise to~$F_\bullet$ so that we obtain a simplicial chain complex. Let us agree to write~$\CR_\bullet(F_\bullet)$ for the corresponding double complex. 

The double complex~$\Hom(\CR_\bullet(F_\bullet),A)$ comes with two spectral sequences that both have 
\[
\rmE_{p,q}^0=\Hom(\CR_p(F_q),A),
\]
and that both converge to the same target. 

If we first use the differential in the~$p$-direction, we get
\[
\rmE_{p,q}^1\cong\HR^p(F_q;A)
\]
by definition of the rack cohomology. Since each rack~$F_q$ is free, we can use Example~\ref{ex:FR}. This gives~\hbox{$\HR^1(F_q;A)\cong\bfR(F_q,A)\cong\Der_X(F_q;A)$} by~\eqref{eq:independent}, and therefore
\[
\rmE_{p,q}^1\cong
\begin{cases}
A & p=0, \\
\Der_X(F_q;A) & p=1, \\
0 & p\geqslant2.
\end{cases}
\]
We can then use the differential in the~$q$-direction and get
\[
\rmE_{p,q}^2\cong
\begin{cases}
A & p=0 \text{ and } q=0,\\
\rmD^q_X(X;A) & p=1 \text{ and } q\geqslant0,\\
0& \text{else},
\end{cases}
\]
by definition of Quillen homology. We clearly have~$\rmE^2\cong\rmE^\infty$, and there are no extension problems. This identifies the target of the first spectral sequence. 

Let us look at the second spectral sequence, which has the same target. We start again from
\[
\rmE_{p,q}^0=\Hom(\CR_p(F_q),A),
\]
but this time we first use the differential in the~$q$-direction, so that we have to compute the homology of the complex~$\CR_p(F_\bullet)$. It follows from Lemma~\ref{lem:lemma_racks} that we get
\[
\rmE_{p,q}^1\cong\Ext^q(\CR_p(X),A).
\]
Because~$\CR_p(X)$ is a free abelian group, this boils down to
\[
\rmE_{p,q}^1\cong
\begin{cases}
\Hom(\CR_p(X),A) & q=0, \\
0 & q\not=0.
\end{cases}
\]
We can then use the differential in the~$p$-direction and get
\[
\rmE_{p,q}^2\cong
\begin{cases}
\HR^p(X;A) & q=0 \\
0 & q\not=0
\end{cases}
\]
by definition of the rack homology. 

Comparing the targets of both spectral sequences now gives the result.
\end{proof}

\begin{remark}
In view of the proof of Theorem~\ref{main:racks}, the reader may wonder about the possible choices of a free resolution~$F_\bullet\to X$. There are at least three points of view on this. 
First, enlarging on Remark~\ref{rem:first_on_resolutions}: Beck and Godement have introduced canonical resolutions based on pairs of adjoint functors. For racks, we can use the forgetful functor from racks to sets and its right adjoint, the free rack functor. Given a rack~$X$, let~\hbox{$\perp\!\!(X)$} be the free rack on the set that underlies~$X$. This defines a co-monad (or co-triple) on the category of racks. Iterating this procedure, one obtains a simplicial resolution~$F_\bullet\to X$ with~\hbox{$F_n=\perp^{n+1}\!\!(X)$} free. This is clearly functorial in~$X$, but everything else but small in general. 
Second, Andr\'e~(in the context of commutative rings) has suggested to construct resolutions in a step-by-step procedure, leading to small~(and perhaps even minimal) results for specific objects.
Third, Quillen worked with the more general and flexible cofibrant replacements, which are available in any category with a model structure, but are even less explicit. It depends on the problem to be solved which of these points of view proves to be most useful.
\end{remark}

There is a similar result for quandles with a similar proof that uses Example~\ref{ex:FQ} and Lemma~\ref{lem:lemma_quandles}:

\begin{theorem}\label{main:quandles}
Let~$X$ be a quandle, and let~$A$ be an abelian group. Then there is an isomorphism
\[
\HQ^{*+1}(X;A)\cong\rmD^*_X(X;A)
\]
between the quandle cohomology of~$X$
and the Quillen cohomology~(for the theory of quandles) of~$X$.
\end{theorem}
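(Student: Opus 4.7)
The plan is to mirror the proof of Theorem~\ref{main:racks} verbatim, substituting the quandle complex~$\CQ_\bullet$ for the rack complex~$\CR_\bullet$, Example~\ref{ex:FQ} for Example~\ref{ex:FR}, and Lemma~\ref{lem:lemma_quandles} for Lemma~\ref{lem:lemma_racks}. Fix a weak equivalence~$F_\bullet\to X$ of simplicial quandles with~$F_\bullet$ level-wise free and~$X$ discrete (this exists by the model structure on~$\rms\bfQ$ from Section~\ref{sec:sTasQuillen}). Apply~$\CQ_\bullet(?)$ level-wise to get a double complex~$\CQ_\bullet(F_\bullet)$, and consider the two spectral sequences associated to~$\Hom(\CQ_\bullet(F_\bullet),A)$ with common~$\rmE^0_{p,q}=\Hom(\CQ_p(F_q),A)$.

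Running the~$p$-differential first produces~$\rmE^1_{p,q}\cong\HQ^p(F_q;A)$. Since~$F_q$ is a free quandle, Example~\ref{ex:FQ} forces vanishing for~$p\geqslant 2$, gives~$A$ for~$p=0$, and in degree~$p=1$ yields~$\bfQ(F_q,A)\cong\Der_X(F_q;A)$ by the quandle analog of~\eqref{eq:independent}. The subsequent~$q$-differential then identifies the~$p=1$ line with the Quillen cohomology~$\rmD^q_X(X;A)$, so~$\rmE^2\cong\rmE^\infty$ with no extension problems, and the abutment is~$\rmD^{*-1}_X(X;A)$ after the customary degree shift.

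Running the~$q$-differential first uses Lemma~\ref{lem:lemma_quandles}: for each fixed~$p$, the Moore complex~$\CQ_p(F_\bullet)$ is a free resolution of the abelian group~$\CQ_p(X)$, so the~$\Hom(-,A)$ of the column computes~$\Ext^q(\CQ_p(X),A)$. The Litherland--Nelson splitting~\cite{Litherland+Nelson} exhibits~$\CQ_p(X)$ as a direct summand of the free abelian group~$\CR_p(X)$, hence~$\CQ_p(X)$ is itself free abelian, and the higher Ext groups vanish. This leaves~$\Hom(\CQ_p(X),A)$ concentrated in the row~$q=0$, and the residual~$p$-differential delivers~$\HQ^p(X;A)$. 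Comparing the two abutments yields the desired isomorphism.

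The one step that requires attention beyond a mechanical translation is the input to the first spectral sequence: one must verify~(i) that pulling back an abelian group~$A$ to a trivial-quandle Beck module~$A\times X\to X$ satisfies~$\Der_X(Y;A)\cong\bfQ(Y,A)$, which follows exactly as~\eqref{eq:independent} since the trivial-quandle functor from sets to quandles is a full embedding that preserves products and abelian group objects, and~(ii) that~$\CQ_p(X)$ is free abelian, whose verification rests on the Litherland--Nelson splitting cited above. Everything else is the formal machinery of the two spectral sequences of a double complex, identical to the rack case.
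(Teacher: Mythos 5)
Your proposal is correct and is essentially the paper's own proof: the paper disposes of Theorem~\ref{main:quandles} by saying it has ``a similar proof that uses Example~\ref{ex:FQ} and Lemma~\ref{lem:lemma_quandles},'' which is exactly the substitution you carry out, and the two points you flag (the quandle analog of~\eqref{eq:independent} and the freeness of~$\CQ_p(X)$) are precisely the details that need checking. On the second point, the Litherland--Nelson splitting works but is more than you need: $\CQ_p(X)$ is the quotient of~$\bbZ[X^p]$ by the span of a subset of the basis, hence is itself free abelian on the complementary set of non-degenerate tuples.
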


This result justifies the title of this paper.



\section{Quillen homology theory}\label{app}

The basic features of Quillen homology in general ensure that it qualifies as a homology {\em theory}: It comes with functoriality, invariants for pairs, long exact sequences~(transitivity), excision isomorphisms~(flat base change), and Mayer--Vietoris sequences. The setup, therefore is not special to quandles; everything works just the same with racks or other algebraic theories. The main difficulty is the identification of sufficiently many~`flat' objects, and we will comment on this is Remark~\ref{rem:flat}.


\subsection{Transitivity}

We start by discussing the functoriality of Quillen homology. When we are given a morphism~$f\colon X\to Y$, we get a morphism~$f_*\Omega_X(X)\to\Omega_Y(Y)$ of~$Y$-modules. If~$F\to X$ and~$G\to Y$ are cofibrant resolutions of~$X$ and~$Y$, respectively, then the lifting property~(MC4) of cofibrations against acyclic fibrations in the diagram
\[
\xymatrix{
\emptyset\ar[rr]\ar[d]&&G\ar[d]\\
F\ar[r]\ar@{-->}[urr]&X\ar[r]&Y
}
\]
secures the existence of a(n essentially unique) morphism~$F\to G$, and, therefore, the existence of a morphism
\begin{equation}\label{eq:mor}
f_*\bbL_X(X)\to\bbL_Y(Y)
\end{equation}
between the associated chain complexes of~$Y$-modules. Note that, from now on, we will mostly drop the distinction between simplicial~$Y$-modules and their associated Moore chain complexes.

\begin{definition}
If~$f\colon X\to Y$ is a morphism, the mapping cone~(i.e.~the homotopy cofiber) of the morphism~\eqref{eq:mor} is called the {\em relative cotangent complex} of~$Y$ over~$X$, and it will be denoted by~$\bbL(Y\!/X)$. This is a chain complex of~$Y$-modules, defined up to quasi-isomorphism (i.e.~up to a weak equivalence of chain complexes).
\end{definition}

By definition, there is a distinguished triangle (i.e.~a cofiber sequence)
\begin{equation}\label{eq:ts1}
f_*\bbL_X(X)\longrightarrow\bbL_Y(Y)\longrightarrow\bbL(Y\!/X)
\end{equation}
of chain complexes of~$Y$-modules.

\begin{example}
The initial object~$\emptyset$ is free on the empty set, and the cotangent complex~$\bbL_\emptyset(\emptyset)$ is not only discrete, it is contractible. It follows that
\begin{equation}\label{eq:eq}
\bbL_Y(Y)\overset{\sim}{\longrightarrow}\bbL(Y\!/\emptyset)
\end{equation} 
is a weak equivalence for all objects~$Y$. This example shows how the absolute cotangent complex~$\bbL_Y(Y)$ is a special case of the relative one: it is~$\bbL(Y\!/X)$ for~\hbox{$X=\emptyset$}.
\end{example}

If~$M$ is a Beck module over~$Y$ and~$f\colon X\to Y$ is a morphism, we will write
\[
\rmD^*(Y\!/X;M)
\]
for the (Quillen) cohomology of the cochain complex~$\Hom_Y(\bbL(Y\!/X),M)$ of abelian groups. If~$X=\emptyset$, then we will abbreviate this to~$\rmD^*(Y;M)$. By~\eqref{eq:eq}, this is naturally equivalent to what we denoted by~$\rmD^*_Y(Y;M)$ earlier. From now on, we will omit the subscripts whenever there is no ambiguity. 

\begin{example}
If we are working with a rack (or a quandle)~$X$, and if, in addition, the module~$M$ is trivial, given by an abelian group~$A$, Theorems~\ref{main:racks} and~\ref{main:quandles} state that~$\rmD^*(X;A)$ agrees with the usual quandle (or rack) cohomology, up to a shift in the degree. 
\end{example}

The groups~$\rmD^*(Y\!/X;M)$ are clearly more general in that they are relative and in that the allow for more general coefficients. And, these cohomology groups are available for all algebraic theories. It will become clear in the progress of this section that the more general groups~$\rmD^*(Y\!/X;M)$ add substantially to the theory. The long exact sequence is a first indicator:
%
Let~$f\colon X\to Y$ be a morphism, and let~$M$ be a Beck module over~$Y$. Then there is a long exact sequence
\[
\cdots
\longleftarrow\rmD^*(X;M)
\longleftarrow\rmD^*(Y;M)
\longleftarrow\rmD^*(Y\!/X;M)
\longleftarrow\rmD^{*-1}(X;M)
\longleftarrow\cdots
\]
in Quillen cohomology.


\begin{remark}
We can still generalize the distinguished triangle~\eqref{eq:ts1} a bit: If we are given another morphism~$g\colon Y\to Z$ that can be composed with~$f\colon X\to Y$, then the octahedral axiom gives rise to a distinguished triangle
\begin{equation}\label{eq:ts2}
g_*\bbL(Y\!/X)\longrightarrow\bbL(Z/X)\longrightarrow\bbL(Z/Y).
\end{equation}
These are usually referred to as the {\em transitivity sequences} in the context of Quillen homology. We can recover the special case~\eqref{eq:ts1} by setting~$X=\emptyset$ to be the initial object.
\end{remark}


\subsection{Flatness}

We need to digress and recall here the basics of flatness in model categories. We can use the exposition by Hill, Hopkins, and Ravenel~\cite[B.2]{Hill+Hopkins+Ravenel} as a reference for history and proofs.

\begin{definition}
A morphism~$f\colon A\to B$ is {\em flat} if the functor~`pushout along~$f$' preserves weak equivalences: For every weak equivalence~$X\to Y$ under~$A$ the induced map~\hbox{$B+_AX\to B+_AY$} is a weak equivalence~\cite[Def.~B.9]{Hill+Hopkins+Ravenel}.
\end{definition}

\begin{lemma}
Compositions, finite sums, pushouts, and retracts of flat maps are flat.
\end{lemma}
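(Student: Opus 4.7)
The plan is to treat each of the four closure properties by a short diagrammatic reduction, using repeatedly the pasting law for pushouts together with the fact that weak equivalences are closed under retracts (axiom MC3).

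For compositions, given flat $f\colon A\to B$ and $g\colon B\to C$ and a weak equivalence $X\to Y$ under $A$, I would use pushout pasting to identify $C+_AX\cong C+_B(B+_AX)$ (and similarly for $Y$). Flatness of $f$ makes $B+_AX\to B+_AY$ a weak equivalence under $B$, and flatness of $g$ then propagates this to a weak equivalence $C+_B(B+_AX)\to C+_B(B+_AY)$, which is the required conclusion. For pushouts, suppose $f'\colon C\to D$ arises as the pushout of a flat map $f\colon A\to B$ along some $A\to C$. For any weak equivalence $X\to Y$ under $C$, another application of pushout pasting gives $D+_CX\cong B+_AX$ and $D+_CY\cong B+_AY$, so the flatness of $f$ directly yields the flatness of $f'$.

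For finite sums, I would factor $f_1+f_2\colon A_1+A_2\to B_1+B_2$ as the composite of $\id_{B_1}+f_2$ with $f_1+\id_{A_2}$, and then argue that each factor is a pushout of one of the $f_i$. Concretely, the square with $f_1$ on top, the coproduct inclusion $A_1\to A_1+A_2$ on the left, $f_1+\id_{A_2}$ on the bottom, and $B_1\to B_1+A_2$ on the right is cocartesian, since $B_1+_{A_1}(A_1+A_2)\cong B_1+A_2$. Each factor is thus flat by the pushout case already established, and flatness of $f_1+f_2$ then follows from the composition case. Induction gives any finite sum. Finally, for retracts, if $f'\colon A'\to B'$ is a retract of a flat map $f\colon A\to B$ in the arrow category, I would view any $X\to Y$ under $A'$ as also living under $A$ via the structure map $A\to A'$ in the retraction data, and exhibit $B'+_{A'}X\to B'+_{A'}Y$ as a retract of $B+_AX\to B+_AY$; the latter is a weak equivalence by flatness of $f$, and MC3 completes the argument.

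The only place that needs real care is the explicit construction of the retract diagram in the arrow category for the last step: one has to check that the retraction maps on $f$ and the reindexing of $X$ and $Y$ along $A\to A'\to A$ assemble functorially into a retract of the full square $B+_AX\to B+_AY$ over $B'+_{A'}X\to B'+_{A'}Y$. Everything else reduces to formal manipulations of pushouts and a single invocation of MC3.
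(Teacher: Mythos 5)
Your argument is correct. The paper itself does not write out a proof: for compositions, sums, and pushouts it simply cites Hill--Hopkins--Ravenel, and for retracts it notes that one invokes (MC3). What you have written is precisely the standard argument behind that citation, so in substance you are taking the same route, just making it explicit: compositions and pushouts follow from the pasting law for pushouts, finite sums reduce to those two cases via the factorization $f_1+f_2=(\id_{B_1}+f_2)\circ(f_1+\id_{A_2})$ with each factor a cobase change of an $f_i$, and retracts follow from (MC3) once one checks that $B'+_{A'}X\to B'+_{A'}Y$ is a retract of $B+_AX\to B+_AY$ in the arrow category. The one point you flag as needing care does go through: the retraction data $(i,r)$ on $A$ and $(j,s)$ on $B$ induce maps $B'+_{A'}X\to B+_AX\to B'+_{A'}X$ composing to the identity (the compatibility conditions reduce to $a'ri=a'$ and $sf=f'r$), and a weak equivalence under $A'$ is in particular a weak equivalence under $A$ via $r$, so flatness of $f$ applies. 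Your write-up is a complete, self-contained substitute for the reference.
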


\begin{proof}
This is~\cite[Def.~B.11]{Hill+Hopkins+Ravenel}. For the last one, use the axiom~(MC3) in model categories.
\end{proof}

\begin{remark}
We refer to~\cite[Prop. B.12, Rem.~B.13]{Hill+Hopkins+Ravenel} for a discussion of the Glueing Lemma that gives conditions that ensure that the pushout of three weak equivalences is a weak equivalence.
\end{remark}

\begin{remark}
In every Quillen model category, sums and pushouts of cofibrations are cofibrations.
A Quillen model category is called {\em left proper} if weak equivalences are preserved by pushouts along cofibrations. In a left proper Quillen model category, all cofibrations are flat~\cite[Ex.~B.10]{Hill+Hopkins+Ravenel}. The model category~$\rms\bfG$ of simplicial groups is left proper by Quillen~\cite[Prop.~3.2]{Quillen:rational}. It is, however not true that the Quillen model structure on~$\rms\bfT$ from Section~\ref{sec:sTasQuillen} is always left proper for all algebraic theories~$\bfT$, see~\cite[2.4]{Rezk}. In that paper, Rezk shows that for every algebraic theory there is another one such that the categories of simplicial models have equivalent homotopy theories, and such that the other one is left proper.
\end{remark}

\begin{definition}
An object~$B$ is {\em flat} if the unique morphism~$\emptyset\to B$ from the initial object~$\emptyset$ to~$B$ is flat. In other words, the object~$B$ is flat if for every weak equivalence~$X\to Y$ the induced map~$B+X\to B+Y$ on sums is a weak equivalence, compare~\cite[Def.~B.15]{Hill+Hopkins+Ravenel}.
\end{definition}

There are examples of algebraic theories where `free' does not imply `flat.' One such is the theory of rings, see~\cite[Ex.~2.7]{Rezk}.

\begin{remark}\label{rem:flat}
Unfortunately, it is currently not at all obvious how to obtain examples of flat morphisms of racks or quandles. Quillen's proof of the fact~\cite[Prop.~3.2]{Quillen:rational} that the model category~$\rms\bfG$ of simplicial groups is left proper, so that free groups (and free maps) are flat, depends crucially on the fact that equivalences between their classifying spaces can be detected by homology with coefficients. These tools are not yet available for racks and quandles. As a consequence, it is at present not even clear whether the free quandle~$\FQ_1$ on one generator~(the singleton!) is flat. Note that the quandle sum~\hbox{$\FQ_1+\FQ_1\cong\FQ_2$} can be identified with the set of conjugates of the two generators in the free group on two elements. Despite the simplicity of the object~$\FQ_1$, the homotopical study of the functor~\hbox{$Q\mapsto Q+\FQ_1$} seems to be a non-trivial endeavor.
\end{remark}


\subsection{Excision and Mayer--Vietoris}

The excision isomorphism in algebraic topology compares the relative homology of two different pairs of spaces under certain assumptions. Here is how this idea is implemented in Quillen homology.
%
Let~$f\colon A\to B$ be a flat morphism, and let
\[
\xymatrix{
A\ar[r]\ar[d]_f&X\ar[d]^g\\
B\ar[r]&Y\\
}
\]
be a pushout diagram. Then the canonical homomorphism
\[
g_*\bbL(X/A)\longrightarrow\bbL(Y\!/B)
\]
is a weak equivalence of~$Y$-modules.
%
%
The preceding result is usually referred to as {\em flat base change} in the context of Quillen homology for commutative rings and algebras~\cite[Thm.~5.3]{Quillen:summary}.




In topology, the Mayer--Vietoris sequence is a long exact sequence induced by a~(nice) pushout square of spaces. More generally in homotopical algebra, we will also start with a pushout square, and we seek conditions under which it induces a long exact sequence in homology. The derived version of (a generalization of) Proposition~\ref{prop:formula} is this:
%
Let
\[
\xymatrix{
A\ar[r]^{f^X}\ar[d]_{f^Y}&X\ar[d]^{g^X}\\
Y\ar[r]_{g^Y}&Z\\
}
\]
be a pushout diagram with one of the morphisms~$f^X$ or~$f^Y$ being flat. There is a cofibration sequence
\[
h_*\bbL(A)\longrightarrow g^X_*\bbL(X)\oplus g^Y_*\bbL(Y)\longrightarrow\bbL(Z)
\]
of simplicial~$Z$-modules, where~$h=g^Xf^X=g^Yf^Y$ denotes the composition. 
This induces the usual Mayer--Vietoris sequence in Quillen homology.

%



\section*{Acknowledgments}

I thank Haynes Miller for engaging my interest in racks and quandles, and him at MIT and Mike Hopkins at Harvard University for their generous hospitality in July 2016.~I also thank Scott Carter, Martin Frankland, and Masahico Saito for their interest. I am grateful to Takefumi Nosaka and the anonymous referees for their comments and suggestions.



\vfill

\parbox{\linewidth}{%
Department of Mathematical Sciences\\
NTNU Norwegian University of Science and Technology\\
7491 Trondheim\\
NORWAY\\
\phantom{ }\\
\href{mailto:markus.szymik@ntnu.no}{markus.szymik@ntnu.no}\\
\href{https://folk.ntnu.no/markussz}{https://folk.ntnu.no/markussz}}


\end{document}